\documentclass[12pt, oneside, reqno]{amsart}
\usepackage{amsmath}
\usepackage{amssymb}
\usepackage{centernot}
\usepackage{color}
\usepackage[dvipsnames,x11names,svgnames]{xcolor}
\usepackage[colorlinks=true,linkcolor=NavyBlue,citecolor=DarkGreen, urlcolor=blue]{hyperref}
\usepackage{amsthm}
\usepackage{amscd}
\usepackage{geometry}
\usepackage{mathrsfs}
\usepackage{dsfont}
\usepackage{cite}
\usepackage{mathtools}

\newcommand{\1}{\mathds{1}}

\newcommand{\Z}{\mathbb{Z}}

\newcommand{\N}{\mathbb{N}}

\newcommand{\F}{\mathbb{F}}

\newtheorem{theorem}{Theorem}[section]
\newtheorem{lemma}[theorem]{Lemma}

\theoremstyle{definition}

\theoremstyle{remark}

\newcommand{\act}{\curvearrowright}
\newcommand{\cO}{\mathcal{O}}
\newcommand{\fp}{\mathfrak{p}}
\begin{document}
 
\title[Footnote to a theorem of Phagan]{Footnote to a theorem of Phagan on actions of a cyclic group}
\author{Anurag Sahay}
\email{\href{mailto:anuragsahay@purdue.edu}{anuragsahay@purdue.edu},\href{mailto:anurag.sahay@duke.edu}{anurag.sahay@duke.edu}}
\urladdr{\href{https://www.math.purdue.edu/~sahay5/}{https://www.math.purdue.edu/$\sim$sahay5/}}

\begin{abstract}

We provide a simplified proof of a recent theorem of Phagan \cite{phagan} relating the number of orbits of a given length of an action $G \act S$ of a cyclic group with the number of orbits of the induced action $H \act S$ of a subgroup $H \subseteq G$. This combinatorial fact has applications to notions of arithmetic similarity of number fields, as investigated by Phagan (op. cit.).

\end{abstract}

\maketitle

\section{Introduction}

Our goal is to present a conceptual (or, at any rate, a simplified) proof of the following theorem of Phagan \cite[Theorem 2.3]{phagan}, which is a fact about group actions of a cyclic group:
\begin{theorem}[Phagan] \label{thm:phagan}
For $n \in \N$, let $G = \Z/n\Z$ be the finite cyclic group of order $n$, acting on a finite set $S$. For $m \in \N$, define $A(m;S)$ to be the number of orbits of the action $G \act S$ which have cardinality precisely $m$. Further, define
\[ M(m;S) = \#\big(H \backslash S\big), \]
the number of orbits under the induced action $H \act S$ of $H = m\Z/n\Z$, the subgroup generated by $m$ as an element of $G$.

With these definitions, we have, for $m \in \N$, 
\begin{equation} \label{maineqn} A(m;S) = \sum_{m \mid d \mid n} \frac{\mu(d/m)}{\phi(d)} \sum_{k \mid d} \mu(k) M(d/k;S), \end{equation}
where $\mu$ and $\phi$ are the usual arithmetic functions of M\"obius and Euler respectively. The first sum here runs over divisors $d$ of $n$ which are themselves multiples of $m$. In particular, the sum is empty if $m \nmid n$.
\end{theorem}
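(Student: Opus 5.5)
The plan is to decompose $S$ into $G$-orbits, compute the contribution of a single orbit to $M(m;S)$, and then obtain \eqref{maineqn} from two successive M\"obius inversions.

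\textbf{Step 1: the contribution of one orbit.} Both $A(\cdot;S)$ and $M(\cdot;S)$ are additive over the decomposition of $S$ into $G$-orbits. This is because every $H$-orbit lies inside a single $G$-orbit. A $G$-orbit of size $e$ has $e \mid n$. It is $G$-isomorphic to $\Z/e\Z$, with $G$ acting by translation through the reduction map $\Z/n\Z \to \Z/e\Z$. On this orbit, $H = m\Z/n\Z$ acts by translations by the subgroup of $\Z/e\Z$ generated by $m$. That subgroup has order $e/\gcd(m,e)$. So the orbit splits into exactly $\gcd(m,e)$ orbits under $H$. Summing over orbits gives, for every $m \in \N$,
\[ M(m;S) = \sum_{e \mid n} \gcd(m,e)\, A(e;S). \]
I expect this bookkeeping to be the only real content of the proof. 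One point needs checking: $m$ need not divide $n$, but the formula uses only the image of $m$ in $\Z/e\Z$, so it holds regardless.

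\textbf{Step 2: first inversion, over divisors.} Write $\gcd(m,e) = \sum_{j \mid \gcd(m,e)} \phi(j)$. Swapping the order of summation gives
\[ M(m;S) = \sum_{j \mid m} \phi(j)\, B(j), \qquad B(j) := \sum_{j \mid e \mid n} A(e;S). \]
Fix $d \mid n$ and apply M\"obius inversion to the function $m \mapsto M(m;S)$ on the divisors of $d$. This yields
\[ \phi(d) B(d) = \sum_{k \mid d} \mu(k) M(d/k;S). \]
Every argument $d/k$ appearing here divides $d$, so the inversion is legitimate.

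\textbf{Step 3: second inversion, over multiples.} Extend $A(e;S) = 0$ for $e \nmid n$. Then $B(j) = \sum_{j \mid e \mid n} A(e;S)$ is a sum over multiples of $j$ among the divisors of $n$. The dual M\"obius inversion on the divisor lattice of $n$ then gives, for $m \mid n$,
\[ A(m;S) = \sum_{m \mid d \mid n} \mu(d/m)\, B(d). \]
Substituting $B(d) = \phi(d)^{-1} \sum_{k \mid d} \mu(k) M(d/k;S)$ from Step 2 yields \eqref{maineqn}.

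Finally, if $m \nmid n$, no orbit has size $m$, so $A(m;S) = 0$. This agrees with the empty sum on the right-hand side of \eqref{maineqn}, which completes the argument.
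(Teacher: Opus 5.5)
Your proof is correct and follows essentially the paper's route. Your $B(j)$ is the paper's $N(j;S)$, and Step 2 establishes the paper's identity $N(m;S)=\sum_{m\mid d}A(d;S)$ using the same per-orbit count $\gcd(m,e)$ and the same expansion $q=\sum_{d\mid q}\phi(d)$, while Step 3 is the same dual M\"obius inversion. The only difference is organizational: the paper verifies the identity on each transitive $G$-set and extends by its additivity lemma, whereas you sum over orbits first to get $M(m;S)=\sum_{e\mid n}\gcd(m,e)A(e;S)$ and then invert directly.
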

As indicated before, this elementary theorem was proven recently by Phagan \cite{phagan}, though we have presented his result with some cosmetic changes:
\begin{itemize} 
\item Most notably, we write the outer sum in this way instead of writing it equivalently as a sum over divisors of $n/m$. 
\item We choose to keep the dependence on $S$ explicit in the notation, since this will be helpful in our presentation.
\item We allow $m$ to vary freely in $\N$, instead of restricting to divisors of $n$.
\item We use slightly different notation for the cyclic groups, the arithmetic functions, et cetera.
\end{itemize}

Using this theorem, whose proof is elementary, Phagan is able to ``effectivize'' results about various notions of ``similarity'' of number fields. As a sample of his results, for a finite extension $K$ of $\mathbb{Q}$ and a rational prime $p$, consider 
\begin{equation} \label{idealfactorization} p\cO_K = \fp_1^{e_1} \fp_2^{e_2} \cdots \fp_g^{e_g}, \end{equation}
the unique factorization of the ideal $p\cO_K$ into prime ideals of the ring of integers, $\cO_K$. Let us denote the number of distinct prime ideals in \eqref{idealfactorization} by $g(p,K)$. It follows from a result of Stuart and Perlis \cite[Main Theorem]{stuartperlis} that if $L$ and $K$ are two number fields with the property that for every rational prime $p$ that does not ramify in either $L$ or $K$ one has the equality $g(p,K) = g(p,L)$, then in fact their Dedekind zeta functions coincide,
\[ \zeta_K(s) = \zeta_L(s). \]
Furthermore, this means that every prime $p$ has the same \emph{splitting type} in both $L$ and $K$, this being the multiset $\{f_1,\cdots,f_g\}$ of inertial degrees $f_j = \dim_{\F_p} \big(\cO_K/\fp_j\big)$ for the prime ideals appearing in \eqref{idealfactorization} (and similarly with $L$). Stuart and Perlis's result is ineffective, in that given the sequence $g(p,K)$ for unramified $p$, it is not clear how to compute the splitting types or the zeta function $\zeta_K(s)$. Using Theorem~\ref{thm:phagan}, Phagan gives formulae \cite[Corollary 2.4]{phagan} and \cite[Theorem 3.2]{phagan} determining the splitting types of each prime from the sequence $g(p,K)$ for unramified primes from which one could reconstruct the zeta function if desired. The reader may consult Phagan's article \cite{phagan} for other applications. 

We present our alternative proof of Theorem~\ref{thm:phagan} in \S\ref{sec:proof}. In \S\ref{sec:analysis}, we briefly reflect on the proof and speculate on generalizations to arbitrary groups.

\subsection*{Acknowledgments} The author was partially supported by Trevor Wooley's start-up funding at Purdue University and by the AMS-Simons Travel Grant at the time when this note was conceived. I am grateful to Shaver Phagan for several illuminating discussions about his work and related topics. I am also thankful to Trevor Wooley for his encouragement and to an anonymous referee for pointing out the reference \cite{crelleplesken} to me.

\section{Proof of Theorem~\ref{thm:phagan}} \label{sec:proof}
The starting point of our proof is similar to Phagan's. Define 
\begin{equation} \label{Ndef} N(m;S) = \frac{1}{\phi(m)} \sum_{k \mid m} \mu(k) M(m/k;S). \end{equation}
 
We will prove the identity
\begin{equation} \label{mainidentity} N(m;S) = \sum_{m \mid d} A(d;S). \end{equation}
It is worth remarking at this point that the sequences $N$ and $A$ are supported on divisors of $n$. For $A(m;S)$ this follows from the fact any orbit of $G$ is isomorphic as a $G$-set to a quotient of $G$. For $N(m;S)$, this will follow from \eqref{mainidentity}, since if $A(d;S) \neq 0$ for some multiple $d$ of $m$, then $d \mid n$ and hence $m \mid n$. Thus, in \eqref{mainidentity}, we could change the condition under the sum to $m \mid d \mid n$. 

From \eqref{mainidentity}, the desired identity \eqref{maineqn} follows by dual M\"obius inversion (see, for example, \cite[Exercise 1.5.16]{murty}). Note that there are no convergence issues here, since all the sequences involved are finitely supported. Alternately -- and perhaps, more simply -- this may be interpreted as M\"obius inversion on the poset of integers which lie between $m$ and $n$ in the divisibility lattice. In this way, we obtain
\[ A(m;S) = \sum_{m \mid d} \mu(d/m) N(d;S) \]
which yields \eqref{maineqn} by recalling that $N(d;S)$ vanishes when $d \nmid n$ and then substituting \eqref{Ndef}, the definition of $N$.

To prove \eqref{mainidentity}, we diverge from Phagan's work. The key simplifying lemma is the following. 
\begin{lemma} \label{lem:additivity}
Let $X \in \{ A, M, N\}$. Then, 
\[ X(m;S_1 \oplus S_2) = X(m;S_1) + X(m;S_2), \]
where $\oplus$ is the direct sum of $G$-sets. Further, if $S \cong T$ are two isomorphic $G$-sets, then 
\[ X(m;S) = X(m;T). \]
\end{lemma}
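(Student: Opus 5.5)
We handle the three quantities in order, starting with $M$ and $A$ and then obtaining $N$ from $M$ by linearity. The basic fact is that the orbit decomposition of a disjoint union is the union of the orbit decompositions. Here $S_1 \oplus S_2$ is the disjoint union $S_1 \sqcup S_2$ with the $G$-action applied componentwise. For any subgroup $K \subseteq G$, each $K$-orbit in $S_1 \sqcup S_2$ is the orbit of a point lying in $S_1$ or in $S_2$. Since each summand is $G$-stable, it is also $K$-stable, so the whole orbit lies in that same summand. Hence
\[ K \backslash (S_1 \oplus S_2) = (K\backslash S_1) \sqcup (K \backslash S_2). \]

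For $X = M$, apply this with $K = H = m\Z/n\Z$ and count orbits; this gives additivity of $M(m;\cdot)$. For $X = A$, apply it with $K = G$. A $G$-orbit of $S_1 \oplus S_2$ that lies in $S_i$ has the same cardinality whether it is regarded as an orbit of $S_i$ or of the sum. So the orbits of size exactly $m$ in the sum are the disjoint union of those in $S_1$ and those in $S_2$, and $A(m;\cdot)$ is additive.

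For $X = N$, note that \eqref{Ndef} expresses $N(m;S)$ as a fixed linear combination, with coefficients $\mu(k)/\phi(m)$ independent of $S$, of the values $M(m/k;S)$. Additivity of each $M(m/k;\cdot)$ then gives additivity of $N(m;\cdot)$ directly. One point to check is that $M(m';S)$ makes sense for every $m'\in\N$, including $m' \nmid n$: the subgroup generated by $m'$ in $\Z/n\Z$ is always defined, so this is fine.

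For isomorphism invariance, let $f\colon S \to T$ be a $G$-equivariant bijection. Then $f$ is also $K$-equivariant for every subgroup $K$. It therefore maps each $K$-orbit $Ks$ bijectively onto $K f(s)$, which induces a bijection $K\backslash S \to K\backslash T$ preserving orbit cardinalities. Taking $K = H$ gives $M(m;S) = M(m;T)$. Taking $K = G$ and restricting to orbits of size $m$ gives $A(m;S)=A(m;T)$. The result for $N$ again follows from \eqref{Ndef} by linearity.

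None of these steps is a real obstacle. The only care needed is to confirm that the summands $S_i$ are stable under every subgroup, which holds because they are $G$-stable. This is what makes orbit counts for the restricted action $H \act S$, and not just for $G \act S$, additive.
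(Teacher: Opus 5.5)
Your proof is correct and takes essentially the same route as the paper. The orbits of $S_1 \oplus S_2$ split as the union of the orbits of the summands, under $G$ for $A$ and under $H$ for $M$, and $N$ inherits both additivity and isomorphism invariance as a fixed linear combination of the $M(m/k;\cdot)$; you also spell out the $H$-stability of the summands and the isomorphism-invariance argument, which the paper leaves to the reader.
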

Before proving the lemma, let us recall the definition of the direct sum $S_1 \oplus S_2$ of two $G$-sets. The underlying set is the disjoint union $S_1 \sqcup S_2$, and the action $G \act S_1 \sqcup S_2$ is simply the action that restricts to the actions $G \act S_1$ and $G \act S_2$ on elements of $S_1$ and $S_2$ respectively. In particular, we see that the first part of the lemma amounts to saying that the sequences $A$, $M$, and $N$ are additive in the second variable while the second part to saying that they are invariant under $G$-set isomorphism.

\begin{proof}[Proof of Lemma~\ref{lem:additivity}]
The set of orbits of $G \act S_1 \oplus S_2$ is precisely the union of the orbits of $G \act S_1$ and $G \act S_2$ respectively. Clearly, then, the number of orbits of a given size in $S_1\oplus S_2$ is the sum of the number of such orbits in $S_1$ and $S_2$ respectively. This establishes
\[ A(m;S_1\oplus S_2) = A(m;S_1) + A(m;S_2). \]
A similar argument establishes additivity of $M$, simply by replacing $G$ by $H = m\Z/n\Z$ and by counting all orbits instead of orbits of a particular size. It is easy to see invariance under $G$-set isomorphism of $A$ and $M$, so we omit the proof. Finally, the additivity and isomorphism-invariance of $N$ follows by definition, since it is defined as a linear combination where each term is additive and isomorphism-invariant in the $G$-set.
\end{proof}

Thus, both sides of the identity \eqref{mainidentity} are clearly additive in $S$. If it holds for $S=S_j$ with $j \in \{1,2\}$, it must consequently hold for $S = S_1 \oplus S_2$. Therefore, to establish the general case, it suffices to establish it in the case where $S$ cannot be decomposed as a direct sum $S_1 \oplus S_2$ with $|S_1|, |S_2| < |S|$. A moment's reflection should convince the reader that $S$ lacks such a decomposition precisely when $G \act S$ is a transitive group action. 

Every transitive $G$-set is isomorphic to the set of cosets $G/N$ of some subgroup $N$ of $G$, with $G$ acting on the left by multiplication. By the isomorphism-invariance of both sides of \eqref{mainidentity}, we can thus further assume that $S$ is a quotient of $G$, and hence $S = \Z/\ell \Z$ for some divisor $\ell \mid n$. The action may be described transparently as follows: for an element $u \in G = \Z/n\Z$ and an element $v \in S = \Z/\ell\Z$, the action of $u$ on $v$ is obtained by first reducing $u$ modulo $\ell$ and then adding it to $v$.

For concision, let us write $A_\ell(m) = A(m; \Z/\ell\Z)$ and similarly for $M$ and $N$. Since the action is transitive, there is only one orbit which is of size $\ell$, whence
\[ A_\ell(m) = \1\big[m = \ell\big], \]
where here and throughout $\1[P]$ is the indicator function of a predicate $P$. From this it follows that the right hand side of \eqref{mainidentity} is
\[ \sum_{m\mid d} A_\ell(d) = \sum_{m\mid d} \1\big[d = \ell\big] = \1\big[m \mid \ell\big]. \] 

On the other hand, 
\[ M_\ell(m) = (\ell,m), \]
the greatest common divisor of $\ell$ and $m$. This requires a small calculation: every orbit of $m\Z/n\Z \act \Z/\ell\Z$ has size equal to the order of $m$ in $\Z/\ell\Z$, which is $\ell/(\ell,m)$, thus the number of orbits must be $(\ell,m)$. Now, in the definition of $N$, \eqref{Ndef}, we replace $k$ with the complementary divisor $m/k$, obtaining
\[ N_\ell(m) = \frac{1}{\phi(m)} \sum_{k \mid m} \mu(m/k) M_\ell(k) = \frac{1}{\phi(m)} \sum_{k \mid m} \mu(m/k) (\ell,k). \]

Thus, to prove \eqref{mainidentity}, we must show the identity
\begin{equation} \label{exercise} \frac{1}{\phi(m)} \sum_{k \mid m} \mu(m/k) (\ell,k) = \1\big[m \mid \ell\big] \end{equation}
for $m,\ell \in \N$. This is now a standard exercise in M\"obius inversion; one way to show this is to insert the identity
\[ q = \sum_{d \mid q} \phi(d) \]
applied to $q = (\ell,k)$ in the left hand side of \eqref{exercise}, reverse the order of summation, and then follow one's nose. This completes the proof.
\section{Reflections and speculations} \label{sec:analysis}
Let us briefly compare our proof to Phagan's. Phagan's proof is inductive, involving an induction essentially on $\omega(m)$, the number of prime divisors of $m$ counted without repetition and it uses the orbit-stabilizer theorem in the base case. Phagan does not change the $G$-set in the course of his proof. Our proof is also inductive, albeit on the cardinality of the $G$-set, $|S|$ instead of on $\omega(m)$. Our presentation hides this in the remark reducing to the case where $G \act S$ is a transitive action. The group theoretic input in our proof is not orbit-stabilizer but instead a factorization theorem of $G$-sets under direct sum. 

The simplifying Lemma~\ref{lem:additivity} has the flavour of the Burnside ring of a group (introduced in \cite{burnside, solomon}, see \cite{bouc} for a modern account). The author suspects there is more to be said here, but does not feel well-equipped to speculate on how this connection may be developed. A referee for this paper indicated that a generalization of Theorem~\ref{thm:phagan} to arbitrary finite groups may be thus obtained and suggested that such a result is already contained in \cite{crelleplesken}. The author was unable to make this precise for himself, especially with regard to deducing \eqref{maineqn} from \cite{crelleplesken}.

\bibliography{footnotebib}
\bibliographystyle{plain}

\end{document}